\begin{document}

\setcounter{page}{1}

\title[A numerical scheme for space-time fractional
advection-dispersion equation] {A numerical scheme for space-time fractional
	advection-dispersion equation}

\author[a1]{Shahnam Javadi\corref{c}}
\email{javadi@khu.ac.ir}

\author[a1]{Mostafa Jani}
\email{mostafa.jani@gmail.com}

\author[a1]{Esmail Babolian}
\email{babolian@khu.ac.ir}

\address[a1]{Department of Mathematics,
Faculty of Mathematical Sciences and Computer, Kharazmi University,
Tehran, Iran}

\cortext[c]{Corresponding author}

\vol{x~~(xxxx)~~No. x}

\pages{~xx-xx}



\authors{Javadi, Jani, Babolian}

\begin{abstract}
In this paper, we develop a numerical resolution of the space-time
fractional advection-dispersion equation. The main idea of the present
method is that we utilize spectral-collocation method combining with
a product integration technique in order to discretize the terms involving
spatial fractional order derivatives and it leads to a simple evaluation
of the related terms. By using the Bernstein polynomial approximation,
the problem is transformed into a linear system of algebraic equations.
The error analysis and the order of convergence of the proposed algorithm
are also discussed. Some numerical experiments are presented to demonstrate
the effectiveness of the proposed method and to confirm the analytic
results.

\begin{keyword}
advection-dispersion equation, space-time fractional PDE, Bernstein
polynomials, product integration, Spectral-collocation.
\subjclass[2010]{Primary 41A10, 65M22, 35R11; Secondary 65M15, 65M70.}
\end{keyword}

\end{abstract}

\maketitle


\section{{ Introduction}}

\noindent In recent decades, many physical processes have been modeled in terms
of fractional partial differential equations (FPDEs). This kind of
mathematical modeling leads to better agreement with data obtained
in lab experiments than the classical models involving integer order
derivatives. For example, among recently developed models, Suzuki
et al. \citep{Suzu} proposed a fractional advection-dispersion equation
(FADE) for description of mass transport in a fractured reservoir.
They also discussed a FADE model for the evaluation of the effects
of cold-water injection into an advection-dominated geothermal reservoir
in fault-related structures in geothermal areas \citep{Suzu2}. Also,
the particle\textquoteright s motion in crowded cellular environments
which represent anomalous dispersion is formulated as a FPDE \citep{Soko}.
Uchaikin and Sibatov used a FPDE model for theoretical description
of charge carrier transport in disordered semiconductors \citep{Ucha}.

Recently, some numerical methods have been developed for the fractional
advection-dispersion equations, most of them for time fractional advection-dispersion
equations \citep{Gao,Rame,Shirzadi,Stok,Jiang} and some for space
fractional advection dispersion equations \citep{Pang,Sous,Wang}.
However, there are some physical problems that are modeled with both
space and time fractional advection-dispersion equation like space-time
fractional Fokker-Planck equation which is an effective tool for the
processes with both traps and flights, in which the time fractional
term characterizes the traps and the space fractional term characterizes
the flights \citep{Deng}. In spite of this and due to the fact that
the numerical methods for these problems are very challenging, there
are a few numerical schemes developed for these equations.

To the best of our knowledge, no numerical scheme based on product
integration method and Bernstein collocation has been developed for
the space-time fractional advection-dispersion equation. The main
contribution of the current work is to implement a product integration
technique for terms involving space fractional derivatives in the
fractional advection-dispersion problem and to use Bernstein polynomials
with the collocation method in order to transform the problem into
an algebraic linear system. 

In comparison with local approximation methods such as the finite
difference and finite element method, the proposed method leads to
a smaller size of coefficient matrix and less computational effort
that is required for a specific accuracy.

This paper is concerned with providing a numerical scheme for the
Caputo space-time fractional advection dispersion equation, given
by
\begin{eqnarray}
{D_{t}^{\alpha}}u\left(x,t\right)=\kappa_{1}{D_{x}^{\beta}}u\left(x,t\right)-\kappa_{2}D_{x}^{\gamma}u\left(x,t\right),\quad\left(x,t\right)\in\Omega=\left(0,L\right)\times\left(0,\infty\right),\label{eq:main}
\end{eqnarray}
where $x$ is the spatial coordinate and $t$ represents time, $u$
is the concentration, $\alpha\in\left(0,1\right)$ is the temporal
order, $\beta\in\left(1,2\right),\,\gamma\in\left(0,1\right)$ are
the orders of spatial fractional derivatives. Moreover, the operators
$D_{t}^{\alpha},$ $D_{x}^{\beta}$ and $D_{x}^{\gamma}$ stand for
time and space fractional derivatives in the sense of Caputo definition
as described in Definition \ref{CaputoDef}. The positive constants
$\kappa_{1}$ and $\kappa_{2}$ denote the anomalous dispersion and
advection coefficients, respectively. When $\alpha=\gamma=1$ and
$\beta=2$, the classical advection dispersion equation is obtained.
In \citep{Deng2} it is shown that the order of space fractional derivative
regarding anomalous dispersion is mostly occurred between 1.4 and
2 to best fit the experimental data and in \citep{Huan} it varies
between 1.7 and 1.8. 

Some authors discussed special cases of problem (\ref{eq:main}),
for instance Gao and Sun \citep{Gao} proposed a numerical method
based on finite difference method for time fractional case, i.e.,
$\beta=2,\gamma=1$ and Deng \citep{Deng} discussed finite element
method in the case $\gamma=1$ with Riemann-Liouville space fractional
derivatives. 

We consider equation (\ref{eq:main}) subject to the following initial
and homogeneous boundary conditions 
\begin{eqnarray}
 &  & u\left(x,0\right)=g\left(x\right),\quad x\in\left[0,L\right],\label{IV}\\
 &  & u\left(0,t\right)=0,\quad u\left(L,t\right)=0,\quad t>0.\label{BVs}
\end{eqnarray}

The paper is organized as follows. In section \ref{sec:Pre}, we briefly
introduce some background material on fractional derivatives. We also
provide the basic idea of product integration rules. Section \ref{sec:Bern}
is devoted to properties of Bernstein polynomials. In this section
we derive two new representation for first and second order derivatives
of Bernstein polynomials. Time discretization of the given problem
is discussed in section \ref{sec:Time-discret}. In section \ref{sec:Main},
we use time discretization results discussed in previous section and
we utilize Bernstein polynomials with product integration in order
to transfer the given problem into solving a simple algebraic system.
Matrix formulation, error analysis and convergence of the method are
discussed in this section. Some numerical experiments are presented
in section \ref{sec:Ex} to show the efficiency and numerical accuracy
of the proposed method. Experimental rate of convergence is also provided
in this section. We summarize the paper and present concluding remarks
in the final section.

\vskip .25in \section{\label{sec:Pre}\bf{Preliminaries}}

\vskip .1in

\noindent
In this section, we provide some definitions and properties of fractional
derivatives and product integration rules which are used further in
the paper.

\subsection{Factional calculus}

\noindent
There are different definitions for fractional derivative. Since the
Caputo definition allows imposing initial and boundary conditions,
it is preferred in applications, so we focus on this definition in
the paper.
\begin{definition}
\label{CaputoDef}\citep{diethelm} The Caputo fractional derivative
of order $\alpha>0$ of the function $f$ is given by 
\begin{eqnarray}
{D_{x}^{\alpha}}f(x)=\frac{1}{\Gamma(m-\alpha)}\int_{0}^{x}\frac{1}{(x-s)^{\alpha-m+1}}\frac{d^{m}f(s)}{ds^{m}}ds,\quad x>0,\label{eq:CaputoDef}
\end{eqnarray}
where $m=\left\lceil \alpha\right\rceil $. 
\end{definition}
According to Definition \ref{CaputoDef}, the Caputo temporal and
spatial fractional derivative of order $\alpha$ of the function $u(x,t)$
are respectively given by
\begin{eqnarray}
 &  & {D_{t}^{\alpha}}u(x,t)=\frac{1}{\Gamma(m-\alpha)}\int_{0}^{t}\frac{1}{(t-s)^{\alpha-m+1}}\frac{\partial^{m}u(x,s)}{\partial s^{m}}ds,\label{eq:4 CaputoFracDef}\\
 &  & {D_{x}^{\alpha}}u(x,t)=\frac{1}{\Gamma(m-\alpha)}\int_{0}^{x}\frac{1}{(x-s)^{\alpha-m+1}}\frac{\partial^{m}u(s,t)}{\partial s^{m}}ds.
\end{eqnarray}

\subsection{\label{subsec:PI}Product integration}

\noindent
Product integration method, originally proposed by Young \citep{Young},
is of convolution type quadrature. It is usually used for integral
equations with singular kernels \citep{Allo,Weiss}. Recently, this
technique has been used for fractional ODEs \citep{Garr}. Here, we
introduce the basic idea behind this simple and powerful technique
(see \citep{atkinson} for a detailed description).

Consider the integral operator
\begin{eqnarray}
\mathcal{K}f(x)=\int_{a}^{b}H(x,s)L(x,s)f(s)ds,\quad a\leq x\leq b,\label{eq:Volt}
\end{eqnarray}
in which $f$ is a smooth function, $K(x,t)=H(x,t)L(x,t)$ is the
kernel of the integral operator such that $\mathcal{K}$ is a compact
operator, $L$ is a well-behaved function and $H$ is a weakly singular
function on $[a,b]\times[a,b]$. Consider the grid space $\{t_{i}=a+ih:\,i=0,...,n\}$
with grid length $h=\frac{b-a}{n}$. To provide an approximation for
(\ref{eq:Volt}), the product integration's idea is to replace $L(x,s)f(s)$
with an interpolating polynomial based on nodes $t_{i},\,i=0,...,n$,
lets denote by $[L(x,s)f(s)]_{n}$ and the remaining terms in (\ref{eq:Volt})
are then evaluated exactly. To be more precise, we use the common
way of approximating $L(x,s)f(s)$, namely Newton Cotes quadrature
formulas. Consider the piecewise linear interpolation
\begin{eqnarray}
[L(x,s)f(s)]_{n}=\frac{1}{h}\left(\left(s-t_{j}\right)L(x,t_{j+1})f(t_{j+1})-\left(s-t_{j+1}\right)L(x,t_{j})f(t_{j})\right),\label{eq:appr}
\end{eqnarray}
for $t_{j}\leq s\leq t_{j+1},\:j=0,...,n-1,$ and $a\leq x\leq b$.
Putting (\ref{eq:appr}) into (\ref{eq:Volt}) gives the following
approximation
\begin{eqnarray*}
\mathcal{K}_{n}f(x)=\sum_{j=0}^{n}w_{j}(t)L(x,t_{j})f(t_{j})
\end{eqnarray*}
where the quadrature weights are obtained as 
\begin{eqnarray*}
 &  & w_{0}(t)=\frac{1}{h}\int_{t_{0}}^{t_{1}}{\left(t_{1}-s\right)H(x,s)ds},\\
 &  & w_{j}(t)=\frac{1}{h}\left(\int_{t_{j-1}}^{t_{j}}{\left(s-t_{j-1}\right)H(x,s)ds}+\int_{t_{j}}^{t_{j+1}}{\left(t_{j+1}-s\right)H(x,s)ds}\right),\quad1\leq j\leq n-1,\\
 &  & w_{n}(t)=\frac{1}{h}\int_{t_{n-1}}^{t_{n}}{\left(s-t_{n-1}\right)H(x,s)ds}.
\end{eqnarray*}
The function $H$ is selected such that the weights can be evaluated
exactly. In this paper, since we use fractional operators, it is assumed
that $H(x,s)=\frac{1}{\left(x-s\right)^{\eta}}$ for some $0<\eta<1$.

\section{\label{sec:Bern}Bernstein polynomial basis}

\vskip .1in

\noindent
Bernstein polynomials provide a flexible tool in solving differential
and integral equations \citep{Behi,Doha,Jani,Saad} as well as in
computer graphics \citep{Fari}. Bernstein polynomials of degree $N$
are defined on the interval $[a,b]$ as follows:
\begin{eqnarray}
B_{i,N}(x)=\frac{\dbinom{N}{i}(x-a)^{i}(b-x)^{N-i}}{(b-a)^{N}},\qquad0\leq i\leq N.\label{eq:1 BernDef}
\end{eqnarray}
In this paper, we suppose that these polynomials are zero for the
cases $i<0$ and $i>N$. The set $\left\{ B_{i,N}(x):\,i=0,\dots,N\right\} $
forms a basis for $\mathcal{P}_{n}$, the set of polynomials of degree
up to $N$. This basis has many advantages in comparison with other
bases in some applications. Farouki \citep{farouki} showed that the
Bernstein polynomial basis on a given interval is optimally stable
in the sense that no other nonnegative basis yields systematically
smaller condition numbers for the values or roots of arbitrary polynomials
on that interval. Moreover, we have the following properties \citep{Faro}
\begin{eqnarray}
 &  & B_{i,N}(a)=\delta_{i,0},\quad B_{i,N}(b)=\delta_{i,N},\label{eq:BernBound}\\
 &  & 0\leq B_{i,N}(x)\leq1,\:\sum_{i=0}^{N}B_{i,N}(x)=1,\\
 &  & B_{i,N}^{\prime}(x)=\frac{N}{b-a}\left(B_{i-1,N-1}(x)-B_{i,N-1}(x)\right),\label{eq:2.7}\\
 &  & B_{i,N-1}(x)=\frac{1}{N}\left[\left(N-i\right)B_{i,N}(x)+\left(i+1\right)B_{i+1,N}(x)\right],\label{eq:2.8}
\end{eqnarray}
for $0\leq i\leq N$. The symbol $\delta$ is the Kronecker symbol. 

For a better formulation of our method, by combining (\ref{eq:2.8})
and (\ref{eq:2.7}) and some simplifications, we derive the following
result to express derivatives of Bernstein basis of degree $N$ in
terms of the same basis.
\begin{theorem}
\label{LemmaBern}For the first and second order derivatives of Bernstein
polynomials, we have the following three-term and five-terms properties
\begin{eqnarray}
 &  & B_{i,N}^{\prime}(x)=\frac{1}{b-a}\sum_{k=-1}^{1}{d_{k,i}^{(1)}B_{i+k,N}(x)},\label{eq:2.9}\\
 &  & B_{i,N}^{\prime\prime}(x)=\frac{1}{\left(b-a\right)^{2}}\sum_{k=-2}^{2}{d_{k,i}^{(2)}B_{i+k,N}(x)},\label{eq:2.10}
\end{eqnarray}
for $0\leq i\leq N$, where the coefficients are given by
\[
\left\{ \begin{array}{l}
d_{-1,i}^{(1)}=N-i+1,\\
d_{-2,i}^{(2)}=\left(N-i+2\right)\left(N-i+1\right),\\
d_{0,i}^{(2)}=N^{2}-6Ni+6i^{2}-N,\\
d_{2,i}^{(2)}=\left(i+2\right)\left(i+1\right).
\end{array}\begin{array}{l}
d_{0,i}^{(1)}=-\left(N-2i\right),\quad d_{1,i}^{(1)}=-(i+1),\\
d_{-1,i}^{(2)}=-2\left(N-i+1\right)\left(N-2i+1\right),\\
d_{1,i}^{(2)}=2\left(i+1\right)\left(N-2i-1\right),\\
\\
\end{array}\right.
\]
\end{theorem}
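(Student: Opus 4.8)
The plan is to obtain both identities by pure algebra from the two known relations (\ref{eq:2.7}) and (\ref{eq:2.8}), without ever using the closed form (\ref{eq:1 BernDef}); the convention $B_{i,N}\equiv 0$ for $i<0$ or $i>N$ then takes care of the boundary indices $i=0$ and $i=N$ at no extra cost.

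First I would prove the three-term formula (\ref{eq:2.9}). Relation (\ref{eq:2.8}) already writes a degree-$(N-1)$ Bernstein polynomial in the degree-$N$ basis, so I would apply it to the two terms $B_{i-1,N-1}$ and $B_{i,N-1}$ on the right-hand side of (\ref{eq:2.7}); specializing it to index $i-1$ gives $B_{i-1,N-1}=\frac{1}{N}\left[(N-i+1)B_{i-1,N}+iB_{i,N}\right]$. Substituting both expansions into $B_{i,N}'=\frac{N}{b-a}\left(B_{i-1,N-1}-B_{i,N-1}\right)$, the factor $N$ cancels and collecting like basis functions yields $d_{-1,i}^{(1)}=N-i+1$, $d_{0,i}^{(1)}=i-(N-i)=-(N-2i)$, $d_{1,i}^{(1)}=-(i+1)$, which is exactly (\ref{eq:2.9}).

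For the five-term formula (\ref{eq:2.10}) I would differentiate the identity just obtained and feed each $B_{i+k,N}'$ back through (\ref{eq:2.9}), producing the double sum $B_{i,N}''=\frac{1}{(b-a)^2}\sum_{k=-1}^{1}\sum_{j=-1}^{1}d_{k,i}^{(1)}d_{j,i+k}^{(1)}B_{i+k+j,N}$. Re-indexing by $m=k+j\in\{-2,\dots,2\}$, the coefficient of $B_{i+m,N}$ is $d_{m,i}^{(2)}=\sum_{k+j=m}d_{k,i}^{(1)}d_{j,i+k}^{(1)}$. The extreme cases $m=\pm 2$ are single products and give at once $(N-i+1)(N-i+2)$ and $(i+1)(i+2)$; the cases $m=\pm 1$ are two-term sums that factor cleanly, e.g.\ for $m=-1$, $d_{-1,i}^{(1)}d_{0,i-1}^{(1)}+d_{0,i}^{(1)}d_{-1,i}^{(1)}=-(N-i+1)\left[(N-2i+2)+(N-2i)\right]=-2(N-i+1)(N-2i+1)$, and symmetrically for $m=1$; and $m=0$ is the three-term sum $-i(N-i+1)+(N-2i)^2-(i+1)(N-i)$, which expands to $N^2-6Ni+6i^2-N$. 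Matching against the displayed table finishes the proof.

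The only genuine computation --- and the step where an arithmetic slip is most likely --- is the expansion and re-factoring of these five coefficient sums; everything else is bookkeeping about which shifted basis functions show up. As a sanity check I would also re-derive the formulas directly at $i=0$, where $B_{0,N}(x)=(b-x)^N/(b-a)^N$, to confirm that the zero-padding convention is consistent with both (\ref{eq:2.9}) and (\ref{eq:2.10}).
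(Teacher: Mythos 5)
Your proposal is correct and follows essentially the same route the paper indicates (the paper gives no detailed proof, only the remark that the theorem follows ``by combining (\ref{eq:2.8}) and (\ref{eq:2.7}) and some simplifications''). I checked all five second-derivative coefficients from your convolution formula $d_{m,i}^{(2)}=\sum_{k+j=m}d_{k,i}^{(1)}d_{j,i+k}^{(1)}$ and they match the stated table, and the zero-padding convention handles the boundary indices as you claim.
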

\begin{remark}
Corresponding to the non-orthogonal Bernstein polynomial basis, the
associated orthonormalized basis, denoted by $\phi_{i,N}(x)$, is
obtained by using Gram-Schmidt algorithm. Bellucci \citep{bellucci}
derived an explicit formula for $\phi_{i,N}(x)$ on the unit interval
$[0,1]$ as
\begin{eqnarray*}
\phi_{i,N}(x)=\sqrt{2(N-i)+1}(1-x)^{N-i}\sum_{k=0}^{j}(-1)^{k}\binom{2N+1-k}{i-k}\binom{i}{k}x^{i-k}.
\end{eqnarray*}
Although it has many advantages due to orthogonality, this basis does
not have the three- and five-term formula similar to (\ref{eq:2.8})
and (\ref{eq:2.10}). For example, for $i=4$ and $N=4$, we obtain
\begin{eqnarray*}
\phi_{i,N}^{\prime}(x)=\sum_{i=0}^{N}c_{i}\phi_{i,N}(x)
\end{eqnarray*}
with all nonzero coefficient $c_{0},c_{1},c_{2},c_{3},c_{4}.$ It
is worth noting that even Chebyshev and Legendre polynomials also
do not have three- and five-terms relation for derivatives (see Sections
3.3 and 3.4 of \citep{Shen} for these polynomials and derivative
relations). We choose the Bernstein polynomials to formulate our method
in Section \ref{sec:Main}.
\end{remark}
It is a well-known result that for a continuous function $f$ on the
unit interval, the Bernstein approximation polynomial defined by
\begin{eqnarray*}
p_{n}(x)=\sum_{k=0}^{n}{f\left(\frac{k}{n}\right)B_{k,n}\left(x\right)},
\end{eqnarray*}
has the following asymptotic property when $f^{\prime\prime}(x)\neq0$
\citep{Voro} 
\begin{eqnarray*}
\lim_{n\rightarrow\infty}n\left(f\left(x\right)-p_{n}\left(x\right)\right)=\frac{x\left(1-x\right)}{2}f^{\prime\prime}\left(x\right).
\end{eqnarray*}

The next two sections are devoted to providing a numerical method
for the problem (\ref{eq:main})-(\ref{BVs}) using Bernstein polynomials
with collocation method utilizing a product integration technique.

\section{\label{sec:Time-discret}Time discretization}

\vskip .1in

\noindent
In this section, we describe discretization of the time fractional
derivative. The fractional derivative uses function information on
a continuous interval, this discretization only evaluates the function
at some nude points so it leads to less computational complexity.
This scheme is a common way in the numerical methods for time dependent
FPDEs \citep{Deng,Gao,Rame}.

Without loss of generality, we consider the problem (\ref{eq:main})-(\ref{BVs})
on the bounded domain $\Omega=[0,1]\times[0,T]$. Let $u_{k}(x):=u(x,t_{k})$
for $k=0,1,..,M,$ where $t_{k}=k\tau$ and $\tau=\frac{T}{M}$ is
the time step length. By using (\ref{eq:4 CaputoFracDef}), the time
fractional derivative at time $t_{k+1}$ in the left side of the equation
(\ref{eq:main}) is formulated as
\begin{eqnarray*}
{D_{t}^{\alpha}}u(x,t_{k+1}) & = & \frac{1}{\Gamma(1-\alpha)}\sum_{j=0}^{k}\int_{t_{j}}^{t_{j+1}}\frac{\frac{\partial u}{\partial s}(x,s)}{(t_{k+1}-s)^{\alpha}}ds\\
 & = & \frac{1}{\Gamma(1-\alpha)}\sum_{j=0}^{k}\frac{u(x,t_{j+1})-u(x,t_{j})}{\tau}\int_{t_{j}}^{t_{j+1}}\frac{ds}{(t_{k+1}-s)^{\alpha}}+r_{\tau}^{k+1}.
\end{eqnarray*}
from which it follows that
\begin{eqnarray}
{D_{t}^{\alpha}}u(x,t_{k+1})=\mu_{\tau}^{\alpha}\sum_{j=0}^{k}a_{k,j}^{\alpha}\left(u\left(x,t_{j+1}\right)-u\left(x,t_{j}\right)\right)+r_{\tau}^{k+1},
\end{eqnarray}
where $\mu_{\tau}^{\alpha}=\frac{1}{\tau^{\alpha}\Gamma(2-\alpha)},\,a_{kj}^{\alpha}=\left(k+1-j\right)^{1-\alpha}-\left(k-j\right)^{1-\alpha}.$
The error bound for truncation error is given by
\begin{eqnarray}
r_{\tau}^{k+1}\leq\tilde{c}_{u}\tau^{2-\alpha},\label{eq:ErrTimeFrac}
\end{eqnarray}
where the coefficient $\tilde{c}_{u}$ is a constant depending only
on $u$ \citep{Deng}. 

Similar to \citep{Deng,Gao}, we consider the time discrete fractional
differential operator $L_{t}^{\alpha}$ as
\begin{eqnarray}
L_{t}^{\alpha}u(x,t_{k+1})=\mu_{\tau}^{\alpha}\sum_{j=0}^{k}a_{k,j}^{\alpha}\left(u_{j+1}\left(x\right)-u_{j}\left(x\right)\right),\label{eq:Time}
\end{eqnarray}
in which $u_{j}(x):=u(x,t_{j+1})$. So we have
\begin{eqnarray*}
{D_{t}^{\alpha}}u(x,t_{k+1})=L_{t}^{\alpha}u(x,t_{k+1})+r_{\tau}^{k+1}.
\end{eqnarray*}
From (\ref{eq:ErrTimeFrac}), it is seen that the order of convergence
is $O(\tau^{2-\alpha}).$ The discretized operator $L_{t}^{\alpha}$
is called the L1 approximation operator \citep{Gao}.

\section{\label{sec:Main}Formulation and algorithm for space-time fractional
advection-dispersion equation (\ref{eq:main})}

\vskip .1in

\noindent
We rewrite the advection-dispersion equation (\ref{eq:main}) at horizontal
time line $t=t_{k+1}$ as
\begin{eqnarray*}
{D_{t}^{\alpha}}u(x,t_{k+1})=\kappa_{1}{D_{x}^{\beta}}u(x,t_{k+1})-\kappa_{2}D_{x}^{\gamma}u(x,t_{k+1}).
\end{eqnarray*}
Using $L_{t}^{\alpha}u(x,t_{k+1})$ given by (\ref{eq:Time}) as an
approximation for ${D_{t}^{\alpha}}u(x,t_{k+1})$, it yields the following
time-discrete scheme for (\ref{eq:main})
\begin{eqnarray}
\mu_{\tau}^{\alpha}\sum_{j=0}^{k}a_{k,j}^{\alpha}\left(u_{j+1}\left(x\right)-u_{j}\left(x\right)\right)=\kappa_{1}{D_{x}^{\beta}}u_{k+1}(x)-\kappa_{2}{D_{x}^{\gamma}}u_{k+1}(x),\label{eq1}
\end{eqnarray}
for $k=0,...,M-1.$ In the following, we discuss about approximating
the terms involving spatial fractional derivatives. 

Let $N$ be a positive integer and $h=\frac{1}{N}$ be the grid size
in the $x$-direction and $x_{r}=\frac{r}{h},\:r=0,...,N.$ We have
the following theorem for a simple evaluation of spatial fractional
derivatives at collocation points.
\begin{theorem}
\label{LemmaAdv} Suppose that $\partial_{x}^{4}u(x,t)$ is continuous
on $\Omega$, $0\leq k\leq M-1$, $1<\beta<2$. Then for $r=1,\ldots.,N-1$,
we have
\begin{eqnarray}
{D_{x}^{\beta}}u_{k+1}(x_{r})=\nu_{h}^{\beta}\sum_{j=0}^{r}{w_{j,r}^{\beta}u_{k+1}^{\prime\prime}(x_{j})}+R_{h}^{\beta}(r),\label{eq:Advection}
\end{eqnarray}
where $\nu_{h}^{\beta}=\frac{h^{2-\beta}}{\Gamma(2-\beta)}$. The
weighting coefficients are given by
\[
\begin{array}{ll}
w_{j,r}^{\beta}=\tilde{w}_{j-1,0}^{\beta}-\tilde{w}_{j,1}^{\beta}, & j=0,...,r,\\
\tilde{w}_{r-j,\rho}^{\beta}=c_{j}^{3-\beta}-\left(j-\rho\right)c_{j}^{2-\beta}, & \rho=0,1,\,j=0,\ldots.,r,\;(j,\rho)\neq(0,1),
\end{array}
\]
with $\tilde{w}_{-1,0}^{\beta}=0$ and $\tilde{w}_{r,1}^{\beta}=0$.
The error is bounded as
\begin{eqnarray}
\left\Vert R_{h}^{\beta}\right\Vert _{\infty}\leq\frac{h^{2}\mathcal{M}_{u}}{8\Gamma(3-\beta)},\label{eq:ErrAdv}
\end{eqnarray}
where $\mathcal{M}_{u}$ depends only on $u$.
\end{theorem}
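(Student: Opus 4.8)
The plan is to read \eqref{eq:Advection} as the product integration rule of Subsection~\ref{subsec:PI} applied directly to the Caputo integral, and then to read off the quadrature weights by evaluating the resulting elementary integrals in closed form. Since $1<\beta<2$ we have $\lceil\beta\rceil=2$, so Definition~\ref{CaputoDef} gives
\begin{eqnarray*}
D_x^\beta u_{k+1}(x_r)=\frac{1}{\Gamma(2-\beta)}\int_0^{x_r}\frac{u_{k+1}''(s)}{(x_r-s)^{\beta-1}}\,ds=\frac{1}{\Gamma(2-\beta)}\sum_{j=0}^{r-1}\int_{x_j}^{x_{j+1}}\frac{u_{k+1}''(s)}{(x_r-s)^{\beta-1}}\,ds,
\end{eqnarray*}
which is precisely the setting of (\ref{eq:Volt})--(\ref{eq:appr}) on $[a,b]=[0,x_r]$ with $n=r$, smooth factor $f(s)=u_{k+1}''(s)$, $L\equiv1$, and weakly singular factor $H(x_r,s)=(x_r-s)^{-\eta}$ with $\eta=\beta-1\in(0,1)$. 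Replacing $u_{k+1}''$ on each $[x_j,x_{j+1}]$ by the piecewise linear interpolant $[u_{k+1}''(s)]_1$ of (\ref{eq:appr}) and integrating the singular factor exactly produces the quadrature $\nu_h^\beta\sum_{j=0}^r w_{j,r}^\beta u_{k+1}''(x_j)$ plus the remainder $R_h^\beta(r)$.

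Next I would compute the weights explicitly. The interpolant $[u_{k+1}''(s)]_1$ gives rise to the two elementary integrals $\int_{x_{j-1}}^{x_j}(s-x_{j-1})(x_r-s)^{1-\beta}\,ds$ and $\int_{x_j}^{x_{j+1}}(x_{j+1}-s)(x_r-s)^{1-\beta}\,ds$; after the substitution $s\mapsto x_r-s$ each reduces to antiderivatives of $s^{2-\beta}$ and $s^{3-\beta}$ evaluated at consecutive grid points, and factors as $h^{3-\beta}$ times an expression in the distance index $r-j$ of the stated form $c_j^{3-\beta}-(j-\rho)c_j^{2-\beta}$. Dividing by $h$ (the $1/h$ in (\ref{eq:appr})) yields the prefactor $h^{2-\beta}$, hence $\nu_h^\beta=h^{2-\beta}/\Gamma(2-\beta)$, and collecting the two contributions to each node value $u_{k+1}''(x_j)$ coming from its neighbouring subintervals gives $w_{j,r}^\beta=\tilde w_{j-1,0}^\beta-\tilde w_{j,1}^\beta$; since $x_0$ has no subinterval to its left and $x_r$ none to its right, one sets $\tilde w_{-1,0}^\beta=\tilde w_{r,1}^\beta=0$. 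This index bookkeeping and the handling of the two boundary weights is the only delicate step of the argument.

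Finally, for the error, subtracting the quadrature from the exact value leaves
\begin{eqnarray*}
R_h^\beta(r)=\frac{1}{\Gamma(2-\beta)}\sum_{j=0}^{r-1}\int_{x_j}^{x_{j+1}}\frac{u_{k+1}''(s)-[u_{k+1}''(s)]_1}{(x_r-s)^{\beta-1}}\,ds.
\end{eqnarray*}
Since $\partial_x^4 u$ is continuous on the compact set $\Omega$ it is bounded there; set $\mathcal M_u=\max_\Omega|\partial_x^4 u|$. Because $u_{k+1}''$ has second $x$-derivative $\partial_x^4 u(\cdot,t_{k+1})$, the standard linear-interpolation estimate gives $|u_{k+1}''(s)-[u_{k+1}''(s)]_1|\le\tfrac{h^2}{8}\mathcal M_u$ on each $[x_j,x_{j+1}]$, so
\begin{eqnarray*}
|R_h^\beta(r)|\le\frac{h^2\mathcal M_u}{8\,\Gamma(2-\beta)}\int_0^{x_r}\frac{ds}{(x_r-s)^{\beta-1}}=\frac{h^2\mathcal M_u}{8\,\Gamma(2-\beta)}\cdot\frac{x_r^{2-\beta}}{2-\beta}\le\frac{h^2\mathcal M_u}{8\,\Gamma(3-\beta)},
\end{eqnarray*}
using $\Gamma(3-\beta)=(2-\beta)\Gamma(2-\beta)$ and $0<x_r\le1$. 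Taking the maximum over $r=1,\dots,N-1$ yields (\ref{eq:ErrAdv}). The main obstacle is the closed-form weight computation in the second step; the error estimate is routine.
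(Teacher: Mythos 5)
Your proposal is correct and follows essentially the same route as the paper: split the Caputo integral over the grid, replace $u_{k+1}''$ by its piecewise linear interpolant, integrate the weakly singular factor $(x_r-s)^{1-\beta}$ exactly to obtain the weights, and bound the remainder via the linear interpolation error. The only (cosmetic) difference is in the remainder estimate, where you apply the uniform bound $|u_{k+1}''-[u_{k+1}'']_1|\le h^2\mathcal{M}_u/8$ first and then integrate the kernel, whereas the paper keeps the Lagrange remainder $(s-x_j)(s-x_{j+1})u_{k+1}^{(4)}(\xi_j(s))/2$ and invokes the first mean value theorem for integrals before bounding; both yield the same constant $h^2\mathcal{M}_u/(8\Gamma(3-\beta))$.
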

\begin{proof}
Based on Definition \ref{CaputoDef}, and utilizing product integration
method as explained in Section \ref{subsec:PI}, we can write
\begin{eqnarray*}
 &  & {D_{x}^{\beta}}u_{k+1}(x_{r})=\frac{1}{\Gamma(2-\beta)}\int_{0}^{x_{r}}\frac{1}{(x_{r}-s)^{\beta-1}}\frac{d^{2}u_{k+1}(s)}{ds^{2}}ds\\
 &  & \approx\frac{1}{\Gamma(2-\beta)}\int_{0}^{x_{r}}\frac{1}{(x_{r}-s)^{\beta-1}}\left[\frac{d^{2}u_{k+1}(s)}{ds^{2}}\right]_{r}ds\\
 &  & =\frac{1}{\Gamma(2-\beta)}\sum_{j=0}^{r-1}\int_{x_{j}}^{x_{j+1}}\frac{1}{(x_{r}-s)^{\beta-1}}\frac{\left(s-x_{j}\right)u_{k+1}^{\prime\prime}(x_{j+1})-\left(s-x_{j+1}\right)u_{k+1}^{\prime\prime}(x_{j})}{h}ds\\
 &  & =\frac{h^{2-\beta}}{\Gamma(2-\beta)}\left(-\left(c_{r}^{3-\beta}-\left(r-1\right)c_{r}^{2-\beta}\right)u_{k+1}^{\prime\prime}(x_{0})\right.\\
 &  & +\sum_{j=1}^{r-1}\left(\left(c_{r-j+1}^{3-\beta}-\left(r-j+1\right)c_{r-j+1}^{2-\beta}\right)-\left(c_{r-j}^{3-\beta}-\left(r-j-1\right)c_{r-j}^{2-\beta}\right)\right)u_{k+1}^{\prime\prime}(x_{j})\\
 &  & \left.+\left(\left(c_{1}^{3-\beta}-c_{1}^{2-\beta}\right)\right)u_{k+1}^{\prime\prime}(x_{r})\right),
\end{eqnarray*}
where $c_{j}^{\eta}:=\frac{(j-1)^{\eta}-j^{\eta}}{\eta}$ for $\eta=2-\beta$
and $\eta=3-\beta$. This is simply written as (\ref{eq:Advection})
by applying the notations in the theorem. Using Lagrange interpolation
error formula, we have
\begin{eqnarray*}
\left|R_{h}^{\beta}(r)\right| & = & \left|\frac{1}{\Gamma(2-\beta)}\sum_{j=0}^{r-1}{\int_{x_{j}}^{x_{j+1}}\frac{(s-x_{j})(s-x_{j+1})u_{k+1}^{(4)}(\xi_{j}(s))}{2(x_{r}-s)^{\beta-1}}ds}\right|\\
 & = & \frac{h^{4-\beta}}{\Gamma(2-\beta)}\sum_{j=0}^{r-1}{\left|u_{k+1}^{(4)}(\bar{\xi}_{j})\int_{r-j-1}^{r-j}{\frac{(r-j-t)(r-j-1-t)}{2t^{\beta-1}}dt}\right|}\\
 & \leq & \frac{h^{4-\beta}}{8\Gamma(3-\beta)}\left\Vert u_{k+1}^{(4)}\right\Vert _{\infty}\sum_{j=0}^{r-1}{(j+1)^{2-\beta}-j^{2-\beta}}\\
 & \leq & \frac{h^{4-\beta}r^{2-\beta}\mathcal{M}_{u}}{8\Gamma(3-\beta)}\\
 & \leq & \frac{h^{2}\mathcal{M}_{u}}{8\Gamma(3-\beta)}\left(\frac{N-1}{N}\right)^{2-\beta}\\
 & \leq & \frac{h^{2}\mathcal{M}_{u}}{8\Gamma(3-\beta)},
\end{eqnarray*}
for $r=1,2,\ldots.,N-1$ and the continuity of $\frac{\partial^{4}u}{\partial x^{4}}(x,t)$
on the closed square $\Omega$ implies that it is bounded, i.e., $\mathcal{M}_{u}=\sup_{(x,t)\in\Omega}{|\frac{\partial^{4}u}{\partial x^{4}}(x,t)|}<\infty$.
Due to the continuity of $\frac{\partial^{4}u}{\partial x^{4}}(x,t)$
and noting that $(s-x_{j})(s-x_{j+1})/(x_{r}-s)^{\beta-1}$ does not
change sign in the interval of integration, we used the first mean
value theorem for integrals in the second equality. This concludes
the proof.
\end{proof}

Next we present the following theorem which can be proved analogous
to Theorem \ref{LemmaAdv}. So we omit the proof. 
\begin{theorem}
\label{LemmaDisp}Suppose that $\partial_{x}^{3}u(x,t)$ is continuous
on $\Omega$, $0\leq k\leq M-1$, $1<\beta<2$. Then for $r=1,\ldots.,N-1$,
we have 
\begin{eqnarray}
{D_{x}^{\gamma}}u_{k+1}(x_{r})=\nu_{h}^{\gamma}\sum_{j=0}^{r}{w_{j,r}^{\gamma}u_{k+1}^{\prime}(x_{j})}+R_{h}^{\gamma}(r),\label{eq:Dispersion}
\end{eqnarray}
where $\nu_{h}^{\gamma}=\frac{h^{1-\gamma}}{\Gamma(1-\gamma)}$ and
the weighting coefficients are given by
\[
\begin{array}{lc}
w_{j,r}^{\gamma}=\tilde{w}_{j-1,0}^{\gamma}-\tilde{w}_{j,1}^{\gamma}, & j=0,...,r,\\
\tilde{w}_{r-j,\rho}^{\gamma}=c_{j}^{2-\gamma}-\left(j-\rho\right)c_{j}^{1-\gamma}, & \rho=0,1,\,j=0,\ldots.,r,\;(j,\rho)\neq(0,1),
\end{array}
\]
and as before we set $\tilde{w}_{-1,0}^{\beta}=0$ and $\tilde{w}_{r,1}^{\beta}=0$
and the error is bounded as
\begin{eqnarray}
\left\Vert R_{h}^{\gamma}\right\Vert _{\infty}\leq\frac{h^{2}\mathcal{\bar{M}}_{u}}{8\Gamma(2-\gamma)}.\label{eq:ErrDisp}
\end{eqnarray}
where $\mathcal{\bar{M}}_{u}=\sup_{(x,t)\in\Omega}{|\frac{\partial^{3}u}{\partial x^{3}}(x,t)|}<\infty.$.
\end{theorem}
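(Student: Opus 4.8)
The plan is to mimic the proof of Theorem \ref{LemmaAdv} step by step, with the dispersion operator of order $\gamma\in(0,1)$ playing the role that the advection operator of order $\beta\in(1,2)$ played there. The key structural difference is that the Caputo derivative of order $\gamma<1$ has $m=\lceil\gamma\rceil=1$, so the integrand involves the first derivative $u_{k+1}'(s)$ and the weakly singular kernel $(x_r-s)^{-\gamma}$ with exponent $\gamma=\eta\in(0,1)$, rather than $u_{k+1}''(s)$ and $(x_r-s)^{\beta-1}$. Everything else — the product integration mechanism of Section \ref{subsec:PI}, the piecewise linear interpolation node structure $x_j=jh$, and the telescoping of the interpolation coefficients — goes through verbatim with the substitutions $\beta\mapsto\gamma+1$ in the relevant places, i.e. $2-\beta\mapsto 1-\gamma$ and $3-\beta\mapsto 2-\gamma$, which is exactly how the stated coefficients $\tilde w^{\gamma}_{r-j,\rho}=c_j^{2-\gamma}-(j-\rho)c_j^{1-\gamma}$ and the prefactor $\nu_h^\gamma=h^{1-\gamma}/\Gamma(1-\gamma)$ arise.

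First I would write ${D_x^\gamma}u_{k+1}(x_r)=\frac{1}{\Gamma(1-\gamma)}\int_0^{x_r}(x_r-s)^{-\gamma}u_{k+1}'(s)\,ds$, split the integral over the subintervals $[x_j,x_{j+1}]$, and on each replace $u_{k+1}'(s)$ by its piecewise linear interpolant through $u_{k+1}'(x_j)$ and $u_{k+1}'(x_{j+1})$ as in \eqref{eq:appr}. Then I would compute the resulting integrals $\int_{x_j}^{x_{j+1}}(s-x_j)(x_r-s)^{-\gamma}\,ds$ and $\int_{x_j}^{x_{j+1}}(s-x_{j+1})(x_r-s)^{-\gamma}\,ds$ exactly via the substitution $s=x_r-ht$; these produce the quantities $c_j^{1-\gamma}$ and $c_j^{2-\gamma}$ after scaling out $h^{1-\gamma}$. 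Collecting the coefficient of each $u_{k+1}'(x_j)$ and telescoping gives precisely the weights $w_{j,r}^\gamma$ with the boundary conventions $\tilde w^{\gamma}_{-1,0}=\tilde w^{\gamma}_{r,1}=0$, yielding \eqref{eq:Dispersion}.

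For the error bound, I would use the Lagrange interpolation remainder for piecewise linear interpolation, $u_{k+1}'(s)-[u_{k+1}'(s)]_r = \tfrac12(s-x_j)(s-x_{j+1})u_{k+1}'''(\xi_j(s))$ on $[x_j,x_{j+1}]$ (this is where the hypothesis $\partial_x^3 u$ continuous on $\Omega$, hence bounded by $\bar{\mathcal M}_u$, enters, exactly as $\partial_x^4 u$ entered the previous proof). Since $(s-x_j)(s-x_{j+1})(x_r-s)^{-\gamma}$ does not change sign on $[x_j,x_{j+1}]$, the first mean value theorem for integrals lets me pull out $u_{k+1}'''(\bar\xi_j)$, bound $|u_{k+1}'''|\le\bar{\mathcal M}_u$, use $|(s-x_j)(s-x_{j+1})|\le h^2/4$, evaluate $\sum_{j=0}^{r-1}\int_{r-j-1}^{r-j} t^{-\gamma}\,dt$ as a telescoping sum $=\tfrac{1}{1-\gamma}r^{1-\gamma}$, and combine $\frac{1}{\Gamma(1-\gamma)}\cdot\frac{1}{1-\gamma}=\frac{1}{\Gamma(2-\gamma)}$. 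With $r\le N-1$ and $h^{1-\gamma}r^{1-\gamma}\le h^2\cdot((N-1)/N)^{1-\gamma}\le h^2$ one gets $\|R_h^\gamma\|_\infty\le \frac{h^2\bar{\mathcal M}_u}{8\Gamma(2-\gamma)}$, which is \eqref{eq:ErrDisp}. The only mild subtlety — the main thing to be careful about rather than a true obstacle — is bookkeeping the index shifts and the two boundary conventions so that the telescoping collapses correctly to the stated closed form for $w_{j,r}^\gamma$; the analytic content is identical to Theorem \ref{LemmaAdv}, which is why the authors omit it.
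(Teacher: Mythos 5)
Your proposal is correct and is exactly the argument the paper intends: the authors omit the proof of Theorem \ref{LemmaDisp} precisely because it is the proof of Theorem \ref{LemmaAdv} with $m=1$ instead of $m=2$, i.e.\ with $u_{k+1}''\mapsto u_{k+1}'$, kernel exponent $\beta-1\mapsto\gamma$, and the exponent shifts $2-\beta\mapsto1-\gamma$, $3-\beta\mapsto2-\gamma$, which is what you carry out, including the correct use of the first mean value theorem (the sign of $(s-x_j)(s-x_{j+1})(x_r-s)^{-\gamma}$ is constant) and the telescoping that yields $h^{3-\gamma}r^{1-\gamma}=h^{2}(hr)^{1-\gamma}\le h^{2}$. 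The only blemish is a typo in your last inequality, where the factor should be $h^{3-\gamma}r^{1-\gamma}\le h^{2}\left(\frac{N-1}{N}\right)^{1-\gamma}\le h^{2}$ rather than $h^{1-\gamma}r^{1-\gamma}\le h^{2}(\cdot)$; the surrounding computation makes clear the intended bound is right.
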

Using Theorems \ref{LemmaAdv} and \ref{LemmaDisp}, we define the
discrete spatial fractional operators $L_{x}^{\beta}u$ and $L_{x}^{\gamma}u$
by
\begin{eqnarray*}
L_{x}^{\beta}u(x_{r},t_{k+1})=\nu_{h}^{\beta}\sum_{j=0}^{r}{w_{j,r}^{\beta}u_{k+1}^{\prime\prime}(x_{j})},\quad L_{x}^{\gamma}u(x_{r},t_{k+1})=\nu_{h}^{\gamma}\sum_{j=0}^{r}{w_{j,r}^{\gamma}u_{k+1}^{\prime}(x_{j})},
\end{eqnarray*}
as approximations for $D_{x}^{\beta}u$ and $D_{x}^{\gamma}u$. Plugging
these approximations into equation (\ref{eq1}) at nodes $x_{r},\,r=1,...,N-1$,
we obtain the following recursive relation
\begin{eqnarray*}
\mu_{\tau}^{\alpha}\sum_{j=0}^{k}{a_{k,j}^{\alpha}\left(u_{j+1}\left(x_{r}\right)-u_{j}\left(x_{r}\right)\right)}=\kappa_{1}\nu_{h}^{\beta}\sum_{j=0}^{r}{w_{j,r}^{\beta}u_{k+1}^{\prime\prime}(x_{j})}-\kappa_{2}\nu_{h}^{\gamma}\sum_{j=0}^{r}{w_{j,r}^{\gamma}u_{k+1}^{\prime}(x_{j})},
\end{eqnarray*}
for $k\geq0$, from which, with simplification, we obtain
\begin{eqnarray}
\mu_{\tau}^{\alpha}u_{k+1}(x_{r})-\kappa_{1}\nu_{h}^{\beta}\sum_{j=0}^{r}{w_{j,r}^{\beta}u_{k+1}^{\prime\prime}(x_{j})}+\kappa_{2}\nu_{h}^{\gamma}\sum_{j=0}^{r}{w_{j,r}^{\gamma}u_{k+1}^{\prime}(x_{j})}=f_{k+1}(x_{r}),\label{VIDE-1}
\end{eqnarray}
where $f_{k+1}(x)=\mu_{\tau}^{\alpha}\left(u_{k}(x)-\sum_{j=0}^{k-1}a_{k,j}^{\alpha}\left(u_{j+1}\left(x\right)-u_{j}\left(x\right)\right)\right)$. 

From the error formulas (\ref{eq:ErrTimeFrac}), (\ref{eq:ErrAdv})
and (\ref{eq:ErrDisp}), the perturbation error for the solution of
problem (\ref{eq:main}) on the domain $\Omega$, when applying spatial
fractional approximations (\ref{eq:Advection}), (\ref{eq:Dispersion})
in conjunction with time fractional approximation (\ref{eq:Time}),
is obtained as
\begin{eqnarray}
\max_{k\geq0}{\left\Vert u(x,t_{k+1})-u_{k+1}(x)\right\Vert _{\infty}}\leq\tilde{c}_{u}\tau^{2-\alpha}+\frac{h^{2}}{8}\left(\frac{\kappa_{1}}{\Gamma(3-\beta)}+\frac{\kappa_{2}}{\Gamma(2-\gamma)}\right)\mathcal{M},\label{eq:TotalError}
\end{eqnarray}
where $\mathcal{M}=max(\mathcal{M}_{u},\bar{\mathcal{M}_{u}})$ depends
only on $u$. This shows the convergence of the proposed method and
also it is seen that the method has order of convergence two for space
and $2-\alpha$ for time. 

In order to obtain the approximate solution of the problem (\ref{eq:main})-(\ref{BVs})
, we solve (\ref{VIDE-1}) using Bernstein polynomial basis. Consider
\begin{eqnarray*}
u_{k+1}(x)\approx\sum_{i=0}^{N}{c_{i,k+1}B_{i,N}(x)},\quad k\geq0
\end{eqnarray*}
as an approximate solution at time step $t_{k+1}$. Using the boundary
conditions (\ref{BVs}), we have $u_{k+1}(0)=0$ and $u_{k+1}(1)=0$.
Now from (\ref{eq:BernBound}), we obtain $c_{0,k+1}=0$, $c_{N,k+1}=0$
and so
\begin{eqnarray}
u_{k+1}(x)\approx\sum_{i=1}^{N-1}{c_{i,k+1}B_{i,N}(x)}.\label{eq:BaseFunc}
\end{eqnarray}
Substituting (\ref{eq:BaseFunc}) accompanied with (\ref{eq:2.9})
and (\ref{eq:2.10}) into (\ref{VIDE-1}), we obtain the following
linear system of equations
\begin{eqnarray}
 &  & \sum_{i=1}^{N-1}{\mathcal{A}_{r,i}c_{i,k+1}}=f_{k+1}(x_{r}),\,1\leq r<N,\label{mainSystem}
\end{eqnarray}
at each time level $k=0,1,\ldots.,M-1,$ where the coefficients are
given by
\begin{eqnarray*}
\mathcal{A}_{r,i} & = & \mu_{\tau}^{\alpha}B_{i,N}(x_{r})-\sum_{j=0}^{r}\left\{ \kappa_{1}\nu_{h}^{\beta}w_{j,r}^{\beta}\sum_{s=-2}^{2}{d_{s,i}^{(2)}B_{i+s,N}(x_{j})}\right..\\
 &  & \left.\hfill\qquad+\kappa_{2}\nu_{h}^{\gamma}w_{j,r}^{\gamma}\sum_{s=-1}^{1}{d_{s,i}^{(1)}B_{i+s,N}(x_{j})}\right\} 
\end{eqnarray*}

Now we write (\ref{mainSystem}) in the matrix form as
\begin{eqnarray*}
\left(\mu_{\tau}^{\alpha}\mathcal{B}-\kappa_{1}\nu_{h}^{\beta}\mathcal{W}_{\beta}\mathcal{D}_{2}+\kappa_{2}\nu_{h}^{\gamma}\mathcal{W}_{\gamma}\mathcal{D}_{1}\right)\boldsymbol{c}_{k+1}=\boldsymbol{f}_{k+1},\quad k\geq0
\end{eqnarray*}
where $\mathbf{c}_{k+1}^{T}=[c_{1,k+1},...,c_{N-1,k+1}]$, $\boldsymbol{f}_{k+1}^{T}=[f_{1,k+1},...,f_{N-1,k+1}]$
and $f_{r,k+1}:=f_{k+1}(x_{r}),\:1\leq r<N$. Let $\boldsymbol{\phi}^{T}(x)=[B_{1,N}(x),...,B_{N-1,N}(x)]$
be the vector of basis. The solution (\ref{eq:BaseFunc}) is written
as $u_{k+1}(x)=\mathbf{c}_{k+1}^{T}\boldsymbol{\phi}(x)$ and the
square matrices $\mathcal{B}$, $\mathcal{W}_{\beta}$ and $\mathcal{W}_{\gamma}$
are defined as
\begin{eqnarray}
 &  & \mathcal{B}=\left[\begin{array}{cccc}
B_{1,N}(x_{1}) & B_{2,N}(x_{1}) & \cdots & B_{N-1,N}(x_{1})\\
B_{1,N}(x_{2}) & B_{2,N}(x_{2}) & \cdots & B_{N-1,N}(x_{2})\\
\vdots & \vdots & \ddots & \vdots\\
B_{1,N}(x_{N-1}) & B_{2,N}(x_{N-1}) & \cdots & B_{N-1,N}(x_{N-1})
\end{array}\right],\\
 &  & \mathcal{W}_{\eta}=\left[\begin{array}{ccccc}
w_{0,1}^{\eta} & w_{1,1}^{\eta} & 0 & \cdots & 0\\
w_{0,2}^{\eta} & w_{1,2}^{\eta} & w_{2,2}^{\eta} & \cdots & 0\\
\vdots & \vdots & \vdots & \ddots & \vdots\\
w_{0,N-2}^{\eta} & w_{1,N-2}^{\eta} & w_{2,N-2}^{\eta} & \cdots & w_{N-2,N-2}^{\eta}\\
\ w_{0,N-1}^{\eta} & w_{1,N-1}^{\eta} & w_{2,N-1}^{\eta} & \cdots & w_{N-2,N-1}^{\eta}
\end{array}\right],
\end{eqnarray}
for $\eta=\beta,\,\gamma$. The matrix $\mathcal{B}$ is well known
as Bernstein collocation matrix that is a totally positive matrix,
i.e., all its minors are nonnegative \citep{Delgado}, and the matrices
$\mathcal{W_{\beta}}$ and $\mathcal{W}_{\eta}$ are lower Hessenberg
matrices. Also $\mathcal{D}_{1}$ and $\mathcal{D}_{2}$ are as follows:
\[
\mathcal{D}_{p}=\sum_{s=-p}^{p}d_{s,i}^{(p)}\left[\begin{array}{cccc}
B_{1+s,N}(x_{0}) & B_{2+s,N}(x_{0}) & \cdots & B_{N-1+s,N}(x_{0})\\
B_{1+s,N}(x_{1}) & B_{2+s,N}(x_{1}) & \cdots & B_{N-1+s,N}(x_{1})\\
\vdots & \vdots & \ddots & \vdots\\
B_{1+s,N}(x_{N-1}) & B_{2+s,N}(x_{N-1}) & . & B_{N-1+s,N}(x_{N-1})
\end{array}\right],
\]
where $p=1,2$. Note that each matrix in this summation can be written
as a permutation of the Bernstein collocation matrix.

Note that for the initial time step, the solution is given by the
initial condition (\ref{IV}) as $u_{0}(x)=g(x)$ and for the remaining
steps, the linear system (\ref{mainSystem}) is solved to obtain the
vecot $\mathbf{c}_{k+1}$ and then the solution is obtained as (\ref{eq:BaseFunc})
at time step $k+1$. The procedure is repeated for $k=0,1,\dots$
until the desired time step is reached. 

\section{\label{sec:Ex}Numerical experiments and discussion}

\vskip .1in

\noindent
Due to nonlocal feature of fractional derivatives, even local-based
numerical techniques like finite element method applied to space-time
fractional problems do not lead to sparse linear systems \citep{Deng}.
But for the cases that only one of time or space derivatives is fractional,
it is possible to develop some methods that lead to a system of equations
with sparse coefficient matrix. For instance, authors of \citep{Gao}
obtained a triple-tridiagonal structure for time-fractional advection-dispersion
equation. The matrix formulation of the method proposed in this paper
shows that the resultant matrix is not sparse but the following numerical
experiments show that acceptable results are obtained with relatively
small resultant matrices.

\begin{table}
\centering{}\caption{\label{tab:varying beta}$L_{2}$ and $L_{\infty}$ error norms for
the scheme (\ref{mainSystem}) for different values of $\beta$ }
\begin{tabular}{cccccc}
\hline 
$\beta$ & $h$ & $E_{2}^{T}$ & $Rate_{2}$ & $E_{\infty}^{T}$ & $Rate_{\infty}$\tabularnewline
\hline 
\multirow{3}{*}{1.2} & 1/4 & 3.685109E-03 &  & 4.927313E-03 & \tabularnewline
 & 1/8 & 9.198267E-04 & 2.0022 & 1.257808E-03 & 1.9698\tabularnewline
 & 1/16 & 2.380429E-04 & 1.9501 & 3.278200E-04 & 1.9399\tabularnewline
\hline 
\multirow{3}{*}{1.4} & 1/4 & 3.684945E-03 &  & 4.927401E-03 & \tabularnewline
 & 1/8 & 9.198071E-04 & 2.0022 & 1.257786E-03 & 1.9699\tabularnewline
 & 1/16 & 2.314130E-04 & 1.9908 & 3.211054E-04 & 1.9697\tabularnewline
\hline 
\multirow{3}{*}{1.6} & 1/4 & 3.683697E-03 &  & 4.924700E-03 & \tabularnewline
 & 1/8 & 9.195621E-04 & 2.0021 & 1.257504E-03 & 1.9694\tabularnewline
 & 1/16 & 2.168473E-04 & 2.0842 & 3.028411E-04 & 2.0539\tabularnewline
\hline 
\multirow{3}{*}{1.8} & 1/4 & 3.681100E-03 &  & 4.924700E-03 & \tabularnewline
 & 1/8 & 9.187786E-04  & 2.0023 & 1.256610E-03 & 1.9704\tabularnewline
 & 1/16 & 2.121292E-04 & 2.1147 & 2.994302E-04 & 2.0692\tabularnewline
\hline 
\end{tabular}
\end{table}

We provide some numerical tests to show the efficiency of the proposed
method and to examine the theoretical results of the paper. Without
loss of generality, we add a forcing term $h(x,t)$ to the space-time
fractional advection-dispersion equation (\ref{eq:main}). 

As the first example, consider the problem (\ref{eq:main}) with initial
condition $g(x)=x^{2}(1-x)^{2}$, homogeneous boundary conditions,
the time and space fractional orders $\alpha=0.4$, $\gamma=0.5$
and anomalous dispersion and advection coefficients $\kappa_{1}=0.001,\,\kappa_{2}=2$,
respectively with the exact solution $u=x^{2}(1-x)^{2}exp(-t)$. Table
\ref{tab:varying beta} shows $E_{2}^{T}=\left\Vert u(x,T)-u_{h}^{T}(x)\right\Vert _{2}$
and $E_{\infty}^{T}=\left\Vert u(x,T)-u_{h}^{T}(x)\right\Vert _{\infty}$
errors at time $T=1$ with time step size $\tau=0.05$ for different
values of $\beta\in(1,2)$ and with different space mesh sizes. Also
experimental rate of convergence is provided in this table. It is
seen that the rate of convergence is about two as it is expected from
theoretical results. Tables \ref{tab:varyingAlpha} and \ref{tab:varyingGamma}
show the computational errors and experimental rate of convergence
for the same parameters as \ref{tab:varying beta} but for $\beta=1.5$
and varying $\alpha\in(0,1)$ and also $\alpha=0.5,\,\beta=1.5$ and
different values of $\gamma\in(0,1)$, respectively. 

\begin{table}
\caption{\label{tab:varyingAlpha}Error norms and experimental order of convergence
for different values of $\alpha$ at $T=1$.}

\centering{}%
\begin{tabular}{cccccc}
\hline 
$\alpha$ & $h$ & $E_{2}^{T}$ & $Rate_{2}$ & $E_{\infty}^{T}$ & $Rate_{\infty}$\tabularnewline
\hline 
\multirow{3}{*}{0.2} &  1/4  & 3.315112E-03 &  & 4.475943E-03 & \tabularnewline
 &  1/8  & 8.253774E-04 & 2.0059 & 1.134993E-03 & 1.9795\tabularnewline
 &  1/16  & 2.198645E-04 & 1.9084 & 2.997409E-04 & 1.9209\tabularnewline
\hline 
\multirow{3}{*}{0.4} &  1/4  & 3.684620E-03 &  & 4.927243E-03 & \tabularnewline
 &  1/8  & 9.198521E-04 & 2.0020 & 1.257693E-03 & 1.9700\tabularnewline
 &  1/16  & 2.661032E-04 & 1.7894 & 3.548821E-04 & 1.8254\tabularnewline
\hline 
\multirow{3}{*}{0.6} &  1/4  & 4.141701E-03 &  & 5.482632E-03 & \tabularnewline
 &  1/8  & 1.042701E-03 & 1.9899 & 1.415220E-03 & 1.9538\tabularnewline
 &  1/16  & 3.255528E-04 & 1.6794 & 4.256264E-04 & 1.7334\tabularnewline
\hline 
\multirow{3}{*}{0.8} &  1/4  & 4.769215E-03 &  & 6.246503E-03 & \tabularnewline
 &  1/8  & 1.225821E-03 & 1.9600 & 1.646025E-03 & 1.9241\tabularnewline
 &  1/16  & 3.157964E-04 & 1.9567 & 4.427624E-04 & 1.8944\tabularnewline
\hline 
\end{tabular}
\end{table}

Note that both in formulation of the method and in numerical tests,
for convenience of formulation, the collocation points are assumed
to be equidistant on the unit interval. However, it can be done using
any arbitrary set of collocation points.

\begin{table}
\caption{\label{tab:varyingGamma}Error norms and experimental order of convergence
for different values of $\gamma$ at $T=1$.}

\centering{}%
\begin{tabular}{cccccc}
\hline 
$\gamma$ & $h$ & $E_{2}^{T}$ & $Rate_{2}$ & $E_{\infty}^{T}$ & $Rate_{\infty}$\tabularnewline
\hline 
\multirow{3}{*}{0.2} &  1/4  & 3.9274E-03 &  & 5.3811E-03 & \tabularnewline
 &  1/8  & 9.8523E-04 & 1.9950 & 1.3545E-03 & 1.9901\tabularnewline
 &  1/16  & 2.5973E-04 & 1.9234 & 3.6105E-04 & 1.9075\tabularnewline
\multirow{3}{*}{0.4} &  1/4  & 3.9810E-03 &  & 5.3830E-03 & \tabularnewline
 &  1/8  & 9.9690E-04 & 1.9976 & 1.3653E-03 & 1.9792\tabularnewline
 &  1/16  & 2.6503E-04 & 1.9113 & 3.6183E-04 & 1.9158\tabularnewline
\multirow{3}{*}{0.6} &  1/4  & 3.7012E-03 &  & 4.8086E-03 & \tabularnewline
 &  1/8  & 9.2097E-04 & 2.0068 & 1.2447E-03 & 1.9498\tabularnewline
 &  1/16  & 2.4507E-04 & 1.9099 & 3.2997E-04 & 1.9154\tabularnewline
\multirow{3}{*}{0.8} &  1/4  & 2.8985E-03 &  & 3.6448E-03 & \tabularnewline
 &  1/8  & 6.6410E-04 & 2.1259 & 8.8712E-04 & 2.0387\tabularnewline
 &  1/16  & 1.7361E-04 & 1.9355 & 2.3733E-04 & 1.9022\tabularnewline
\hline 
\end{tabular}
\end{table}

As the second example, consider the advection dispersion equation
(\ref{eq:main}) with the coefficients $\kappa_{1}=0.1,\,\kappa_{2}=5$
and fractional orders as $\alpha=0.5,\,\beta=1.5$ and $\gamma=0.5$
with initial condition $u(x,0)=0$ and the forcing term, $h(x,s)$
such that the exact solution is $u=\sin{\left(\pi x\right)}t^{2}.$
Table \ref{tab:ErrorsTimeSpace} provides the $L_{2}$ error norms
with overall CPU time (seconds) consumed in the algorithm for obtaining
the numerical solution at $T=1$ .

\begin{table}
\caption{\label{tab:ErrorsTimeSpace}$L_{2}$ norm error and CPU time after
$M=\frac{1}{\tau}$ steps }

\centering{}%
\begin{tabular}{|c|c|c|c|}
\hline 
$h$ & $\tau$ & $E_{2}^{T}$ & CPU time (s)\tabularnewline
\hline 
\hline 
1/4 & 1/10 & 3.4898E-02 & 0.16\tabularnewline
\hline 
1/6 & 1/20 & 1.3529E-02 & 0.843\tabularnewline
\hline 
1/8 & 1/30 & 7.5908E-03 & 2.359\tabularnewline
\hline 
1/10 & 1/40 & 4.8497E-03 & 6.266\tabularnewline
\hline 
\end{tabular}
\end{table}

\section{\label{sec:Con}Concluding remarks}

\vskip .1in

\noindent
In this paper, we proposed a numerical approach for solving apace
and time fractional advection dispersion equations. After time discretization,
we used product integration technique to derive some explicit formulas
for terms involving space fractional derivatives. By utilizing Bernstein
polynomial basis, we transformed the problem into solving a linear
system of algebraic equations. It is worth to mention that the proposed
method is different from the well-known method of line in the sense
that in our method, the problem at first is discretized in time leading
to a boundary value problem instead of an initial value problem in
order to utilize collocation method with Bernstein basis. We also
discussed the error estimation of the approximating relations for
fractional derivatives and using some numerical experiments, we showed
that the method is efficient and simple to implement for solving fractional
advection dispersion equations on a bounded domain. From these numerical
tests, it is seen that the associated experimental order of convergence
is consistent with the analysis. The proposed method can be applied
to a wide range of space and time fractional partial differential
equations on bounded domains.

{\small

}


\begin{thebibliography}{99}

\bibitem{Allo}C. Allouch, P. Sablonniere, D. Sbibih and M. Tahrichi,
{\it Product integration methods based on discrete spline quasi-interpolants
and application to weakly singular integral equations}, J. Comput.
Appl. Math. 233 (2010) 2855--2866.

\bibitem{atkinson}K.E. Atkinson, {\it The numerical solution of integral
equations of the second kind}, Cambridge University Press, 1997.

\bibitem{Behi}S.H. Behiry, {\it Solution of nonlinear Fredholm integro-differential
equations using a hybrid of block pulse functions and normalized Bernstein
polynomials}, J. Comput. Appl. Math. 260 (2014) 258--265.

\bibitem{bellucci}M. Bellucci, {\it On the explicit representation of
orthonormal Bernstein polynomials}, arxiv.org/pdf/1404.2293.

\bibitem{Delgado}J. Delgado and J. M. Pena. {\it Optimal conditioning
of Bernstein collocation matrices}, SIAM J. Matrix Anal. Appl. 31 (3)
(2009) 990--996.

\bibitem{Deng}W. Deng, {\it Finite element method for the space and time
fractional Fokker-Planck equation}, SIAM J. Numer. Anal. 47 (2008)
204--226.

\bibitem{Deng2}Z.Q. Deng, V.P. Singh and L. Bengtsson, {\it Numerical solution
of fractional advection\textendash dispersion equation}, J. Hydraul.
Eng. 130 (2004) 422--432.

\bibitem{diethelm}K. Diethelm, {\it The analysis of fractional differential
equations}, Springer-Verlag, Berlin, 2010.

\bibitem{Doha}E.H. Doha, A.H. Bhrawy and M.A. Saker, {\it Integrals of Bernstein
polynomials: An application for the solution of high even-order differential
equations}, Appl. Math. Lett. 24 (2011) 559--565.

\bibitem{Fari}G.E. Farin, {\it Curves and Surfaces for CAGD: A Practical
Guide}, 5th edition, Morgan Kaufmann, San Francisco, 2002.

\bibitem{farouki}R.T. Farouki and T.N.T. Goodman, {\it On the optimal stability
of Bernstein basis}, Math. Comp. 65 (1996) 1553--1566.

\bibitem{Faro}R.T. Farouki and V.T. Rajan, {\it Algorithms for polynomials
in Bernstein form}, Comput. Aided Geom. Design 5 (1988) 1--26.

\bibitem{Gao}G.H. Gao and H.W. Sun, {\it Three-point combined compact difference
schemes for time-fractional advection\textendash diffusion equations
with smooth solutions}, J. Comput. Phys. 298 (2015) 520--538.

\bibitem{Garr}R. Garrappa and M. Popolizio, {\it On accurate product integration
rules for linear fractional differential equations}, J. Comput. Appl.
Math. 235 (2011) 1085--1097.

\bibitem{Huan}G. Huang, Q. Huang and H. Zhan, {\it Evidence of one-dimensional
scale-dependent fractional advection\textendash dispersion}, J. Contam.
Hydrol. 85 (2006) 53--71.

\bibitem{Jani}M. Jani, E. Babolian and S., Javadi,  {\it Banded operational matrices for Bernstein 
polynomials and application to the fractional advection-dispersion equation}, Numer Algor (2016) doi:10.1007/s11075-016-0229-1.

\bibitem{Jiang}Y. Jiang and J. Ma, {\it High-order finite element methods
for time-fractional partial differential equations}, J. Comput. Appl.
Math. 235 (2011) 3285--3290.

\bibitem{Pang}G. Pang, W.  and Z. Fu, {\it Space-fractional advection-dispersion
equations by the Kansa method}, J. Comput. Phys. 293 (2015) 280--296.

\bibitem{Rame}M. Ramezani, M. Mojtabaei and D.Mirzaei, {\it DMLPG solution
of the fractional advection-diffusion problem}, Eng. Anal.
Bound. Elem. 59 (2015) 36--42.

\bibitem{Saad}A. Saadatmandi, {\it Bernstein operational matrix of fractional
derivatives and its applications}, Appl. Math. Model. 38 (2014) 1365--1372.

\bibitem{Shen}J. Shen, T. Tang and L.L. Wang, {\it Spectral methods}, Springer-Verlag,
Heidelberg, 2011.

\bibitem{Shirzadi}A. Shirzadi, L. Ling and S. Abbasbandy, {\it Meshless simulations
of the two-dimensional fractional-time convection\textendash diffusion-reaction
equations}, Eng. Anal. Bound. Elem. 36 (2012) 1522--1527.

\bibitem{Sous}E. Sousa, {\it An explicit high order method for fractional
advection diffusion equations}, J. Comput. Phys. 278 (2014) 257--274.

\bibitem{Soko}I.M. Sokolov, {\it Models of anomalous diffusion in crowded
environments}, Soft Matter 8 (2012) 9043--9052.

\bibitem{Stok}P.W. Stokes, B. Philippa, W. Read and R.D. White, {\it Efficient
numerical solution of the time fractional diffusion equation by mapping
from its Brownian counterpart}, J. Comput. Phys. 282 (2015) 334--344.

\bibitem{Suzu}A. Suzuki, Y. Niibori, S.A. Fomin, V.A. Chugunov and T.
Hashida, {\it Fractional derivative-based tracer analysis method for the
characterization of mass transport in fractured geothermal reservoirs},
Geothermics 53 (2015) 125--132.

\bibitem{Suzu2}A. Suzuki, Y. Niibori, S.A. Fomin, V.A. Chugunov and T.
Hashida, {\it Prediction of reinjection effects in fault-related subsidiary
structures by using fractional derivative-based mathematical models
for sustainable design of geothermal reservoirs}, Geothermics 57 (2015)
196--204.

\bibitem{Ucha}V.V. Uchaikin and R.T. Sibatov, {\it Fractional theory for
transport in disordered semiconductors}, Commun. Nonlinear Sci. Numer.
Simul. 13 (4) (2008) 715--727.

\bibitem{Voro}E. Voronowskaja, {\it Determination de la forme asymptotique
d\textquoteright approximation des fonctions par les polynomes de
M. Bernstein}, Doklady Akademii Nauk SSSR (1932) 79--85.

\bibitem{Wang}K. Wang and H. Wang, {\it A fast characteristic finite difference
method for fractional advection\textendash diffusion equations}, Adv.
Water Resour. 34 (2011) 810--816.

\bibitem{Weiss}R. Weiss,{\it  Product integration for the generalized
Abel equation}, Math. Comp. 26 (1972) 177--190.

\bibitem{Young}A. Young, {\it Approximate product-integration}, Proc. Roy.
Soc. London Ser. A. 224 (1954) 552--561.

\end{thebibliography}
\end{document}